\newtheorem{problem}{Problem}
\numberwithin{problem}{section}
\newtheorem{theorem}{Theorem}
\numberwithin{theorem}{section}
\numberwithin{assumption}{section}
\newtheorem{lemma}{Lemma}
\numberwithin{lemma}{section}
\numberwithin{corollary}{section}
\DeclareMathOperator*{\argmin}{arg\,min}
\newcommand{\rhv}{{\hat{V}_{t}^{\frac{1}{2}}}}
\begin{document}
\title{Conjugate-gradient-based Adam for stochastic optimization and its application to deep learning}
\author{Yu~Kobayashi\thanks{Y. Kobayashi and H. Iiduka are with the Department
of Computer Science, Meiji University, Kanagawa 214-8571, Japan.\protect\\
E-mail: yuukbys@cs.meiji.ac.jp}~\and~Hideaki~Iiduka\footnotemark[1]
\date{}}
\maketitle

\begin{abstract}
This paper proposes a conjugate-gradient-based Adam algorithm blending Adam with nonlinear conjugate gradient methods and shows its convergence analysis.
Numerical experiments on text classification and image classification show that the proposed algorithm can train deep neural network models in fewer epochs than the existing adaptive stochastic optimization algorithms can.
\end{abstract}

%\keywords{Stochastic Optimization Problem, Stochastic Gradient, Conjugate Gradient Method, Machine Learning, Deep Learning}

\section{Introduction}
\label{sec:introduction}
Adaptive stochastic optimization algorithms based on stochastic gradient and exponential moving averages have a strong presence in the machine learning field. 
The algorithms are used to solve stochastic optimization problems; they especially, play a key role in finding more suitable parameters for deep neural network (DNN) models by using empirical risk minimization (ERM) \cite{cesa2004}.
 The DNN models perform very well in many tasks, such as natural language processing (NLP), computer vision, and speech recognition. 
For instance, recurrent neural networks (RNNs) and their variant long short-term memory (LSTM) are useful models that have shown excellent performance in NLP tasks. 
Moreover, convolutional neural networks (CNNs) and their variants such as the residual network (ResNet) \cite{he2016deep} are widely used in the image recognition field \cite{he2018mask}. 
However, these DNN models are complex and need to tune a lot of parameters to optimize, and finding appropriate parameters for the prediction is very hard. 
Therefore, it would be very useful to look for optimization algorithms for minimizing the loss function and finding better parameters.

%%%%%%%%%%%%%%%%%%%%%%%%%%%%%%%%%%%%%%%%%%%%%%%%%%%%%%%%%
In this paper, we focus on adaptive stochastic optimization algorithms based on stochastic gradient and exponential moving averages. 
The stochastic gradient descent (SGD) algorithm \cite{borkar2009stochastic,bottou-98x, nemirovski2009robust, robbins1985stochastic}, which uses a stochastic gradient with a smart approximation method, is a great cornerstone that underlies other modern stochastic optimization algorithms.
Numerous variants of SGD have been proposed for many interesting situations, in part, because it is sensitive to an ill-conditioned objective function or step size (called the learning rate in machine learning). 
To deal with this problem, momentum SGD \cite{qian1999momentum} and the Nesterov accelerated gradient method \cite{doki1983nesterov} leverage exponential moving averages of gradients. 
In addition, adaptive methods, AdaGrad \cite{duchi2011adaptive} and RMSProp \cite{tieleman2012lecture}, take advantage of an efficient learning rate derived from element-wise squared stochastic gradients. In the deep learning community, Adam \cite{kingma2014adam} is a popular method that uses exponential moving averages of stochastic gradients and of element-wise squared stochastic gradients. 
However, despite it being a powerful optimization method, Adam does not converge to the minimizers of the stochastic optimization problems in some cases. 
As a result, a variant, AMSGrad \cite{reddi2019convergence}, was proposed to guarantee convergence to the optimal solution.

The nonlinear conjugate gradient (CG) method \cite{hager2006survey} is an elegant, efficient technique of deterministic unconstrained nonlinear optimization. 
Unlike the basic gradient descent methods, the CG method does not use vanilla gradients of the objective function as the search directions. 
Instead of normal gradients, conjugate gradient directions are used in the CG method, which can be computed from not only the current gradient but also past gradients. 
Interestingly, the method requires little memory and has strong local and global convergence properties. 
The way of generating conjugate gradient directions has been researched for long time, and efficient formulae have been proposed, such as Hestenes-Stiefel (HS) \cite{hs}, Fletcher-Reeves (FR) \cite{fletcher1964function}, Polak-Ribi\`ere-Polyak (PRP) \cite{polak1969note, polyak1969conjugate}, Dai-Yuan (DY) \cite{dai1999nonlinear}, and Hager-Zhang (HZ) \cite{hager2005new}.

For the present study, we developed a stochastic optimization algorithm, which we refer to as conjugate-gradient-based Adam (CoBA, Algorithm \ref{algo:coba}), for determining more comfortable parameters for DNN models. 
The algorithm proposed herein combines the CG method with the existing stochastic optimization algorithm, AMSGrad, which is based on Adam. 
Our analysis indicates that the theoretical performance of CoBA is comparable to that of AMSGrad (Theorem \ref{thm:ca}). 
In addition, we {give} several examples {in which} the proposed algorithm can be {used} to train DNN models {in certain} significant tasks. 
In concrete terms, {we conducted numerical experiments on} training {an} LSTM for text classification and making {a} ResNet for image classification. The results demonstrate that{, thanks to the benefits of conjugate gradients,} CoBA performs better than the existing adaptive methods, such as AdaGrad, RMSProp, Adam, and AMSGrad, in the sense of minimizing the sum of loss functions.

This paper is organized as follows. Section~\ref{sec:mp} gives the mathematical preliminaries. Section~\ref{sec:pa} presents the CoBA algorithm for solving the stochastic optimization problem and analyzes its convergence. Section~\ref{sec:exp} numerically compares the behaviors of the proposed algorithms with those of the existing ones. Section~\ref{sec:ccl} concludes the paper with a brief summary.

\section{Mathematical Preliminaries}
\label{sec:mp}
We use the standard notation \(\mathbb{R}^N\) for \(N\)-dimensional Euclidean space, {with} the standard Euclidean inner product \(\langle\cdot ,\cdot\rangle \colon\mathbb{R}^N\times\mathbb{R}^N\to\mathbb{R}\) and associated norm \(\|\cdot \|\). 
{Moreover, for all \(i\in\mathcal{N}:=\{1,\ldots ,N\}\), let \(a_i\) be {the} \(i\)-th coordinate of \(\bm{a}\in\mathbb{R}^N\)}. Then, for all vector{s} \(\bm{a}\in\mathbb{R}^N\), \(\|\cdot\|_\infty\) denotes {the} \(\ell_\infty\)-norm, defined as \(\|\bm{a}\|_\infty :=\max_{i\in\mathcal{N}}|a_i|\), \(\tilde{\bm{a}}\) denotes {the} element-wise square, and \(\text{diag}(\bm{a})\) indicates a diagonal matrix whose diagonal entries starting in the upper left corner are \(a_1,\ldots ,a_N\). Further, for any vectors \(\bm{a}, \bm{b}\in\mathbb{R}^N\), we use \(\max{\{\bm{a}, \bm{b}\}}\) to denote {the} element-wise maximum. For a matrix \(A\) and a constant \(p\), let \(A^{p}\) be {the} element-wise \(p\)-th power of \(A\).

We use \(\mathcal{F}\subset\mathbb{R}^N\) to denote a nonempty, closed convex feasible set and say \(\mathcal{F}\) has a bounded diameter \(D_\infty\) if \(\|\bm{x}-\bm{y}\|_{\infty}\le D_{\infty}\) for all \(\bm{x}\),\(\bm{y}\in\mathcal{F}\). Let \(f\colon\mathcal{F}\to\mathbb{R}\) denote a noisy objective function which is differentiable on \(\mathcal{F}\) and \(f_1,\ldots f_T\) be the realization of the stochastic noisy objective function \(f\) at subsequent timesteps \(t\in\mathcal{T}:=\{1,\ldots ,T\}\). For a {positive-}definite matrix \(A\in\mathbb{R}^{N\times N}\), the Mahalanobis norm is defined as \(\|\cdot\|_A:=\sqrt{\langle \cdot , A\cdot \rangle}\) and the projection onto \(\mathcal{F}\) under the norm \(\|\cdot\|_A\) is defined for all \(\bm{y}\in\mathbb{R}^N\) by 
\begin{align*}
	\{\Pi_{\mathcal{F}, A}(\bm{y})\}:=&\argmin_{\bm{x}\in\mathcal{F}}\|\bm{x}-\bm{y}\|_A
	=\argmin_{\bm{x}\in\mathcal{F}}\sqrt{\langle\bm{x}-\bm{y}, A(\bm{x}-\bm{y})\rangle} .
\end{align*}

Let \(\xi\) be a random number whose probability distribution \(P\) is supported on {a} set \(\Xi\subset\mathbb{R}\). Suppose that it is possible to generate independent, identically distributed (iid) numbers \(\xi_1 ,\xi_2,\ldots\) of realization of \(\xi\). We use \(\bm{\mathsf{g}}_t:=\nabla f_{\xi_t}(\bm{x}_t)\) to denote a stochastic gradient of \(f_{\xi_t}\) at \(\bm{x}_t\in\mathbb{R}^N\). 

\subsection{Adaptive stochastic optimization methods for stochastic optimization}
Let us consider the stochastic optimization problem:
\begin{problem}\label{prob:sop}
Suppose that \(\mathcal{F}\subset\mathbb{R}^N\) is nonempty, closed, and convex and \(f_t\colon\mathcal{F}\to\mathbb{R}\) is convex and differentiable for all \(t\in\mathcal{T}\). Then, 
\begin{align}
	\text{minimize }\sum_{t\in\mathcal{T}} f_t(\bm{x})\text{ subject to }\bm{x}\in\mathcal{F}.
\end{align}
\end{problem}
%%%%%%%%%%%%%%%%%%%%%%%%%%%%%%%%%%%%%%%%%%%%%%%%%%%%%%%%
Stochastic gradient descent (SGD) method \cite{borkar2009stochastic,bottou-98x, nemirovski2009robust, robbins1985stochastic} is a basic method based on using {the} stochastic gradient for {solving} Problem \ref{prob:sop}, and {it outperforms} algorithms based on {a} batch gradient. The method generates the sequence {by using} the following update rule:
\begin{equation}\label{eq:sgd}
\bm{x}_{t+1}:=\Pi_\mathcal{F}(\bm{x}_{t} - \alpha_t\bm{\mathsf{g}}_t),
\end{equation}
where \(\alpha_t >0\) and \(\Pi_\mathcal{F}\colon\mathbb{R}^N\to\mathcal{F}\) is the projection onto the set \(\mathcal{F}\) defined as \(\{\Pi_\mathcal{F}(\bm{y})\}:=\argmin_{\bm{x}\in\mathcal{F}}\|\bm{x}-\bm{y}\|\) (\(\bm{y}\in\mathbb{R}^N\)). A diminishing step size \((\alpha_t:=\alpha / \sqrt{t})_{t\in\mathcal{T}}\) for a positive constant \(\alpha\) is typically used for \((\alpha_t)_{t\in\mathcal{T}}\). Also, adaptive algorithms using {an} exponential moving average, which are variants of SGD, are useful for {solving} Problem \ref{prob:sop}. For instance, AdaGrad \cite{duchi2011adaptive}, RMSProp \cite{tieleman2012lecture}, and Adam \cite{kingma2014adam} {perform very well at} minimizing {the} loss functions used in deep learning {applications}. 

In this paper, we focus on Adam{, which is fastest at minimizing the loss function in deep learning.}
For all \(t\in\mathcal{T}\), the algorithm updates {the} parameter \(\bm{x}_t\) {by using} the following update rule: for \(\alpha\in(0,\infty)\), \(\beta_1\), \(\beta_2\), \(\epsilon\in(0,1)\) and \(\bm{m}_0=\bm{v}_0=\bm{0}\),
\begin{align}\label{eq:adam}
	\begin{split}
		&\bm{m}_{t}:=\beta_1\bm{m}_{t-1}+(1-\beta_1)\bm{\mathsf{g}}_t,\\
		&\bm{v}_{t}:=\beta_2\bm{v}_{t-1}+(1-\beta_2)\tilde{\bm{\mathsf{g}}}_t,\ V_t:=\text{diag}(\bm{v}_{t}),\\
		&\bm{\mathsf{d}}_t:=\left[\frac{{m}_{t,1}}{\sqrt{{v}_{t,1}}+\epsilon},\ldots ,\frac{{m}_{t,N}}{\sqrt{{v}_{t,N}}+\epsilon}\right]^\top,\\
		&\bm{x}_{t+1}:=\Pi_{\mathcal{F},V_t^{\frac{1}{2}}}(\bm{x}_{t}-\alpha \bm{\mathsf{d}}_t).
	\end{split}
\end{align}

Although Adam is {an} excellent {choice} for {solving the} stochastic optimization problem, it does not always converge, as shown in \cite[Theorem 3]{reddi2019convergence}. Reference \cite{reddi2019convergence} presented a good variant algorithm of Adam, called AMSGrad, which converges to a solution {to} Problem \ref{prob:sop}. The AMSGrad algorithm is as follows: for {\((\alpha_t)_{t\in\mathcal{T}}\subset (0,\infty)\)}, \((\beta_{1t})_{t\in\mathcal{T}}\subset (0,1)\), \(\beta_2\), \(\epsilon\in(0,1)\) and \(\bm{m}_0=\bm{v}_0=\bm{0}\),
\begin{align}\label{eq:amsgrad}
	\begin{split}
		&\bm{m}_{t}:=\beta_{1t}\bm{m}_{t-1}+(1-\beta_{1t})\bm{\mathsf{g}}_t,\\
		&\bm{v}_{t}:=\beta_2\bm{v}_{t-1}+(1-\beta_2)\tilde{\bm{\mathsf{g}}}_t,\\
		&\hat{\bm{v}}_{t}:=\max\{\hat{\bm{v}}_{t-1},\bm{v}_{t}\},\ \hat{V}_t:=\text{diag}(\hat{\bm{v}}_{t}),\\
		&\bm{\mathsf{d}}_{t}:=\left[\frac{{m}_{t,1}}{\sqrt{\hat{v}_{t,1}}+\epsilon},\ldots ,\frac{{m}_{t,N}}{\sqrt{\hat{v}_{t,N}}+\epsilon}\right]^\top,\\
		&\bm{x}_{t+1}:=\Pi_{\mathcal{F}, \hat{V}_t^{\frac{1}{2}}}\left(\bm{x}_{t}-\alpha_t \bm{\mathsf{d}}_{t}\right).
	\end{split}
\end{align}
%%%%%%%%%%%%%%%%%%%%%%%%%%%%%%%%%%%%%%%%%%%%%%%%%%%%%%%%
\subsection{Nonlinear conjugate gradient methods}
{N}onlinear conjugate gradient (CG) methods \cite{hager2006survey} are {used} for solving deterministic unconstrained nonlinear optimization problems, as formulated below:
\begin{problem}\label{prob:nuop}
Suppose that \(f\colon\mathbb{R}^N\to\mathbb{R}\) is continuously differentiable. Then,
\begin{align}
	\text{minimize~}f(\bm{x})\text{~subject~to~}\bm{x}\in\mathbb{R}^N.
\end{align}
\end{problem}
{The} nonlinear CG method in \cite{hager2006survey} for {solving} Problem \ref{prob:nuop} generates a sequence $(\bm{x}_t)_{t\in\mathcal{T}}$ with an initial point $\bm{x}_1\in\mathbb{R}^N$ and the following update rule:
\begin{align}\label{eq:cgupdate}
	\bm{x}_{t+1}:=\bm{x}_{t}+\alpha_t \bm{d}_t,
\end{align}
where {\((\alpha_t)_{t\in\mathcal{T}}\subset (0,\infty)\)}.
{The} search direction {\((\bm{d}_t)_{t\in\mathcal{T}}\subset\mathbb{R}^N\)} {used in the update rule (\ref{eq:cgupdate})} is called {the} conjugate gradient direction and {is} defined as the follows:
\begin{align}
	\bm{d}_t:=-\bm{g}_t+\gamma_t \bm{d}_{t-1},
\end{align}
where $\bm{g}_t:=\nabla f(\bm{x}_t)$ and \(\bm{d}_0=\bm{0}\). Here, we use $\gamma_t$ to denote the conjugate gradient update parameter, which can be computed {from} the gradient values $\bm{g}_t$ and $\bm{g}_{t-1}$. 
The parameter $\gamma_t$ has been researched for many years because {its value has a large effect on} the nonlinear objective function \(f\). For instance, the following parameters proposed by Hestenes-Stiefel (HS) \(\gamma_t^{\rm HS}\) \cite{hs}, Fletcher-Reeves (FR) \(\gamma_t^{\rm FR}\) \cite{fletcher1964function}, Polak-Ribi{\`e}re-Polyak (PRP) $\gamma_t^{\rm PRP}$ \cite{polak1969note,polyak1969conjugate}, and Dai-Yuan (DY) $\gamma_t^{\rm DY}$ \cite{dai1999nonlinear} are widely used to solve Problem \ref{prob:nuop}:
\begin{align}
	&\gamma_t^{\text{HS}}:=\frac{\langle \bm{g}_t, \bm{y}_t\rangle } {\langle \bm{d}_{t-1}, \bm{y}_t\rangle },\label{eq:hs}\\
	&\gamma_t^{\text{FR}}:=\frac{\| \bm{g}_t\|^2}{\| \bm{g}_{t-1}\|^2},\label{eq:fr}\\
	&\gamma_t^{\text{PRP}}:=\frac{\langle \bm{g}_t, \bm{y}_t\rangle } {\| \bm{g}_{t-1}\|^2},\label{eq:prp}\\
	&\gamma_t^{\text{DY}}:=\frac{\| \bm{g}_t\|^2} {\langle \bm{d}_{t-1}, \bm{y}_t\rangle },\label{eq:dy}
\end{align}
where $\bm{y}_t:=\bm{g}_t-\bm{g}_{t-1}$.

In addition, Hager-Zhang \(\gamma_t^{\rm HZ}\) \cite{hager2005new} {is an improvement on} \(\gamma_t^{\rm HS}\) defined by (\ref{eq:hs}) {that works well on} Problem \ref{prob:nuop}. 
The parameter $\gamma_t^{\rm HZ}$ is computed {as follows}:
\begin{align}\label{eq:hz}
	\gamma_t^{\text{HZ}}:=\frac{\langle \bm{g}_t, \bm{y}_t\rangle } {\langle \bm{d}_{t-1}, \bm{y}_t\rangle }-\lambda \frac{\| \bm{y}_t\|^2} {\langle \bm{d}_{t-1}, \bm{y}_t\rangle^2 }\langle \bm{g}_t, \bm{d}_{t-1}\rangle ,
\end{align}
where $\bm{y}_t:=\bm{g}_t-\bm{g}_{t-1}$ and $\lambda > 1/4$.
%% proposed algorithms ％％％％％％％％％％％％％％％%%%%%%%%%%%%%%%%%%%%%%%%%%%    
\section{Proposed algorithm}\label{sec:pa}
This section presents {the conjugate}-gradient-based Adam (CoBA) algorithm (Algorithm \ref{algo:coba} {is the listing)}. 
The way in which the parameters satisfying steps 7--11 are computed is based on the update rule of AMSGrad \eqref{eq:amsgrad}. 
The existing algorithm computes an momentum parameter $\bm{m}_t$ and an adaptive learning rate parameter $\bm{v}_t$ by using {the} stochastic gradient $\bm{\mathsf{g}}_t$ computed in step 4 for all $t\in\mathcal{T}$. 
We replace {the} stochastic gradients $\bm{\mathsf{g}}_t$ {used} in AMSGrad with conjugate gradients and compute $\bm{m}_t$ with {the} conjugate gradients $\bm{\mathsf{d}}_t$ computed in steps~5--6 for all $t\in\mathcal{T}$. 
Here, {the} conjugate gradient update parameters \(\gamma_t\) are calculated {using} each of (\ref{eq:hs})--(\ref{eq:hz}) for all \(t\in\mathcal{T}\).
\begin{algorithm}
	\caption{Conjugate-gradient-Based Adam (CoBA)}
	\begin{algorithmic}[1]\label{algo:coba}
    		\REQUIRE{\(\bm{x}_1\in\mathcal{F},(f_t)_{t\in\mathcal{T}}, (\alpha_t)_{t\in\mathcal{T}},\subset(0,\infty), (\beta_{1t})_{t\in\mathcal{T}}\subset (0,1), \beta_2,\epsilon\in (0,1),a \in (1,\infty ), M\in(0, \infty ).\)}
		\STATE{\(t\leftarrow 1\)}
		\STATE{\(\bm{m}_0:=\bm{0}, \bm{v}_0:=\bm{0}, \bm{\mathsf{d}}_0:=\bm{0}\)}
		\LOOP
			\STATE{$\bm{\mathsf{g}}_{t}:=\nabla_{\bm{x}} f_t(\bm{x}_{t})$}
			\STATE{$\gamma_t$: conjugate gradient update parameter}
			\STATE{$\bm{\mathsf{d}}_t:=\bm{\mathsf{g}}_t-\frac{M}{t^{a}}\gamma_t\bm{\mathsf{d}}_{t-1}$}
			\STATE{$\bm{m}_{t}:=\beta_{1t}\bm{m}_{t-1}+(1-\beta_{1t})\bm{\mathsf{d}}_t$}
			\STATE{$\bm{v}_{t}:=\beta_2\bm{v}_{t-1}+(1-\beta_2)\tilde{\bm{\mathsf{g}}}_t$}
			\STATE{$\hat{\bm{v}}_{t}:=\max\{\hat{\bm{v}}_{t-1},\bm{v}_{t}\}$, $\hat{V}_t:=\text{diag}(\hat{\bm{v}}_t)$}
			\STATE{$\hat{\bm{\mathsf{d}}}_{t}:=\left[\frac{{m}_{t,1}}{\sqrt{\hat{v}_{t,1}}+\epsilon},\ldots ,\frac{{m}_{t,N}}{\sqrt{\hat{v}_{t,N}}+\epsilon}\right]^\top$}
			\STATE{$\bm{x}_{t+1}:=\Pi_{\mathcal{F}, \hat{V}_t^{\frac{1}{2}}}\left(\bm{x}_{t}-\alpha_t \hat{\bm{\mathsf{d}}}_{t}\right)$}
		\ENDLOOP
	\end{algorithmic}
\end{algorithm}

%%convergence analysis
Furthermore, we {give a} convergence analysis of the proposed algorithm. The proof is given in Appendix \ref {apx:ca}.
\begin{theorem}\label{thm:ca}
Suppose that \((\bm{x}_t)_{t\in\mathcal{T}}\), \((\bm{v}_t)_{t\in\mathcal{T}}\), and \((\bm{\mathsf{d}}_t)_{t\in\mathcal{T}}\) are the sequences generated by Algorithm \ref{algo:coba} with \(\alpha\in (0,\infty)\), \(\alpha_t:=\alpha / \sqrt{t}\), \(\mu:=\beta_1 / \sqrt{\beta_2} < 1\), \(\beta_{11}:=\beta_1\), and \(\beta_{1t}\le\beta_1\) for all \(t\in\mathcal{T}\). Assume that \((\gamma_t)_{t\in\mathcal{T}}\) is bounded, $\mathcal{F}$ has a bounded diameter $D_{\infty}$, and there exist \(G_{\infty},\bar{G}_\infty\in\mathbb{R}\) such that \(G_{\infty}=\max_{t\in\mathcal{T}}\left({\sup_{\bm{x}\in\mathcal{F}}{\|\nabla f_t(\bm{x})\|_{\infty}}}\right)\) and \(\bar{G}_\infty=\max{\{2G_\infty, \max_{t\in\{1,\ldots ,t_0-1\}}\|\bm{\mathsf{d}}_t\|\}}\) for some \(t_0\). Then, for any solution \(\bm{x}^\star\) of Problem \ref{prob:sop}, the regret $R(T):=\sum_{t=1}^T\{f_t(\bm{x}_t)-f_t(\bm{x}^{\star})\}$ satisfies the following inequality:
\begin{align*}
	R(T)\le&\frac{D_\infty^2\sqrt{T}}{\alpha(1-\beta_1)}\sum_{i=1}^N \sqrt{\hat{v}_{T,i}}
	+\frac{D_\infty^2}{2(1-\beta_1)}\sum_{t=1}^T\frac{\beta_{1t}}{\alpha_t}\sum_{i=1}^N \sqrt{\hat{v}_{t,i}}\\
	&+\frac{\alpha\sqrt{1+\log T}}{(1-\beta_1)^2(1-\mu)\sqrt{1-\beta_2}}\sum_{i=1}^N\sqrt{\sum_{t=1}^{T}\mathsf{d}_{t,i}^2}\\
	&+D_\infty \bar{G}_\infty\sum_{t=1}^T \frac{|\gamma_t|}{t^a}.
\end{align*}
\end{theorem}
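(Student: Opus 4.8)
The plan is to adapt the online-convex-optimization regret analysis of AMSGrad (\cite{reddi2019convergence}) to the conjugate-gradient setting, treating the conjugate gradient $\bm{\mathsf{d}}_t$ as the effective ``gradient'' that drives the momentum $\bm{m}_t$ and absorbing the discrepancy between $\bm{\mathsf{d}}_t$ and the true stochastic gradient $\bm{\mathsf{g}}_t$ into a controllable error term. Before anything else I would establish a preliminary lemma: the sequence $(\bm{\mathsf{d}}_t)_{t\in\mathcal{T}}$ is bounded by $\bar{G}_\infty$. Since $\bm{\mathsf{d}}_t=\bm{\mathsf{g}}_t-(M/t^{a})\gamma_t\bm{\mathsf{d}}_{t-1}$ with $(\gamma_t)$ bounded and $\|\bm{\mathsf{g}}_t\|$ controlled by $G_\infty$, one chooses $t_0$ so large that $(M/t^{a})|\gamma_t|\le\tfrac12$ for all $t\ge t_0$; a short induction then gives $\|\bm{\mathsf{d}}_t\|\le G_\infty+\tfrac12\|\bm{\mathsf{d}}_{t-1}\|\le 2G_\infty$ for $t\ge t_0$, while the finitely many earlier terms are covered by the maximum defining $\bar{G}_\infty$. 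This is the genuinely new ingredient relative to the AMSGrad proof.

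Next, using convexity of each $f_t$, I would start from $R(T)\le\sum_{t=1}^{T}\langle\bm{\mathsf{g}}_t,\bm{x}_t-\bm{x}^\star\rangle$ and substitute $\bm{\mathsf{g}}_t=\bm{\mathsf{d}}_t+(M/t^{a})\gamma_t\bm{\mathsf{d}}_{t-1}$ to split the regret into a \emph{main} part $\sum_t\langle\bm{\mathsf{d}}_t,\bm{x}_t-\bm{x}^\star\rangle$ and a \emph{correction} part $\sum_t(M/t^{a})\gamma_t\langle\bm{\mathsf{d}}_{t-1},\bm{x}_t-\bm{x}^\star\rangle$. The correction is bounded directly by H\"older's (or the Cauchy--Schwarz) inequality together with the diameter bound $\|\bm{x}_t-\bm{x}^\star\|_\infty\le D_\infty$ and the boundedness $\|\bm{\mathsf{d}}_{t-1}\|\le\bar{G}_\infty$, producing a term of the form $D_\infty\bar{G}_\infty\sum_t|\gamma_t|/t^{a}$, i.e.\ the final term of the stated bound (the constant $M$ being absorbed).

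For the main part I would run the standard AMSGrad machinery with $\bm{\mathsf{d}}_t$ playing the role of the gradient. Using $\bm{x}^\star\in\mathcal{F}$ and the non-expansivity of $\Pi_{\mathcal{F},\hat{V}_t^{1/2}}$ in the $\|\cdot\|_{\hat{V}_t^{1/2}}$ norm, I expand $\|\bm{x}_{t+1}-\bm{x}^\star\|^2_{\hat{V}_t^{1/2}}\le\|\bm{x}_t-\alpha_t\hat{\bm{\mathsf{d}}}_t-\bm{x}^\star\|^2_{\hat{V}_t^{1/2}}$ to extract (up to the $\epsilon$-regularization) a term proportional to $\langle\bm{m}_t,\bm{x}_t-\bm{x}^\star\rangle$. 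Inverting the recursion $\bm{m}_t=\beta_{1t}\bm{m}_{t-1}+(1-\beta_{1t})\bm{\mathsf{d}}_t$ writes $\langle\bm{\mathsf{d}}_t,\bm{x}_t-\bm{x}^\star\rangle$ as $(1-\beta_{1t})^{-1}$ times a difference of an $\langle\bm{m}_t,\cdot\rangle$ term and a $\beta_{1t}\langle\bm{m}_{t-1},\cdot\rangle$ term, the latter being split off by Young's inequality. Summing over $t$ and telescoping the quadratic terms, using the AMSGrad monotonicity $\hat{v}_{t,i}\ge\hat{v}_{t-1,i}$ and the diameter $D_\infty$, yields the first two terms of the bound (the boundary term carrying $\sqrt{\hat{v}_{T,i}}$ and the $\sum_t(\beta_{1t}/\alpha_t)\sqrt{\hat{v}_{t,i}}$ term).

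The crux, and the step I expect to be hardest, is the per-coordinate estimate
\[
\sum_{t=1}^{T}\frac{\alpha_t\,m_{t,i}^2}{\sqrt{\hat{v}_{t,i}}+\epsilon}\le\frac{\alpha\sqrt{1+\log T}}{(1-\beta_1)(1-\mu)\sqrt{1-\beta_2}}\sqrt{\sum_{t=1}^{T}\mathsf{d}_{t,i}^2},
\]
which generates the third term (the extra factor $(1-\beta_1)$ in the stated denominator arising from the $(1-\beta_{1t})^{-1}$ inversion above). Its proof expands $m_{t,i}$ as a convex-combination sum over the past $\mathsf{d}_{j,i}$, applies Cauchy--Schwarz in the time index to bound $m_{t,i}^2$, uses $\hat{v}_{t,i}\ge v_{t,i}$ to cancel one power against $\sqrt{\hat{v}_{t,i}}$, collapses the resulting geometric factor $\sum(\beta_1/\sqrt{\beta_2})^{t-j}=\sum\mu^{t-j}$ via $\mu<1$, and finishes with a Cauchy--Schwarz over $t$ together with $\sum_{t=1}^{T}1/t\le 1+\log T$ (using $\alpha_t=\alpha/\sqrt{t}$) to extract $\sqrt{1+\log T}\cdot\sqrt{\sum_t\mathsf{d}_{t,i}^2}$. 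The delicate point is keeping the second-moment estimate $\hat{v}_{t,i}$ compatible with the $\bm{\mathsf{d}}$-based momentum so that this cancellation goes through cleanly; once the lemma is in hand, summing the three contributions over $i$ and adding the correction term gives the claimed regret bound.
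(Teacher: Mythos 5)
Your proposal is correct and follows essentially the same route as the paper: the same boundedness lemma for \((\bm{\mathsf{d}}_t)\) via induction with the \(\tfrac{M}{t^a}|\gamma_t|\le\tfrac12\) threshold, the same split of the regret into an AMSGrad-style main part driven by \(\bm{\mathsf{d}}_t\) plus a correction bounded by \(D_\infty\bar{G}_\infty\sum_t|\gamma_t|/t^a\), and the same per-coordinate Cauchy--Schwarz/geometric-sum estimate yielding the \(\sqrt{1+\log T}\) term. The only cosmetic difference is that the paper states that last estimate as a lemma with a general shift index \(l\) (to cover both \(\bm{m}_t\) and \(\bm{m}_{t-1}\) terms), which your sketch handles implicitly.
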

Theorem \ref{thm:ca} indicates that Algorithm \ref{algo:coba} has the nice property of convergence {of the average regret \(R(T)/T\)}, whereas Adam does not guarantee convergence in {that} sense, as shown in \cite[Theorem 3]{reddi2019convergence}. 
In addition, we can see that the {properties} of Algorithm \ref{algo:coba} shown in Theorem \ref{thm:ca} {are} theoretically almost the same as {those} of AMSGrad \eqref{eq:amsgrad} (see \cite[Theorem 4]{reddi2019convergence}):
\begin{align*}
	R(T)\le&\frac{D_\infty^2\sqrt{T}}{\alpha(1-\beta_1)}\sum_{i=1}^N \sqrt{\hat{v}_{T,i}}+\frac{D_\infty^2}{2(1-\beta_1)}\sum_{t=1}^T\frac{\beta_{1t}}{\alpha_t}\sum_{i=1}^N \sqrt{\hat{v}_{t,i}}\\
	&+\frac{\alpha\sqrt{1+\log T}}{(1-\beta_1)^2(1-\mu)\sqrt{1-\beta_2}}\sum_{i=1}^N\sqrt{\sum_{t=1}^{T}\mathsf{g}_{t,i}^2}.\\
\end{align*}

%%%%%%%%%%%%%%%%%%%%%%%%%%%%%%%%%%%%%%%%%%%%%%%%%%%%%%%%
\section{Experiments}\label{sec:exp}
This section presents the results of experiments evaluating our algorithms and comparing them with the existing algorithms.

Our experiments were conducted on {a fast scalar computation server}\footnote{\url{https://www.meiji.ac.jp/isys/hpc/ia.html}} at Meiji University. 
The environment has two Intel(R) Xeon(R) Gold 6148 (2.4 GHz, 20 cores) CPU{s}, a{n} NVIDIA Tesla V100 (16GB, 900Gbps) GPU and {a} Red Hat Enterprise Linux 7.6 operating system. 
The experimental code {was} written in Python 3.6.9{,} and we used {the} NumPy 1.17.3 package and PyTorch 1.3.0 package.

\subsection{Text classification}
\label{subsec:text-classification}
We used the proposed algorithms to learn a {long short-term memory} (LSTM) for text classification. 
The LSTM is an artificial recurrent neural network (RNN) architecture used in the field of deep learning {for} natural language processing{, time-}series analysis{, etc}. 

This experiment used the IMDb dataset\footnote{\url{https://datasets.imdbws.com/}} for text classification task{s}. 
The dataset contains 50,000 movie reviews along with their associated binary sentiment polarity labels. 
The dataset is split into 25,000 train{ing} and 25,000 test sets.

We trained a multilayer neural network for {solving} the text classification problem on the IMDb dataset. 
We used a{n} LSTM with an {affine} layer and a {sigmoid} function as an activation function for the output. 
For training it, we used the {binary cross entropy} (BCE) as a loss function minimized by {the} existing and proposed algorithms. 
The BCE loss {\(L\colon\mathbb{R}^T\times\mathbb{R}^T\to\mathbb{R}\)} is defined as follows:
\begin{equation}\label{eq:loss}
	L(\bm{y}, \bm{z}):=-\frac{1}{T}\sum_{t=1}^T \Bigl\{y_t\log z_t + (1-y_t)\log (1-z_t)\Bigr\},
\end{equation}
where {\(\bm{y}:=(y_t)_{t\in\mathcal{T}}\)} with a binary class label \(y_t\in\{0,1\}\){, meaning} a positive or negative review, and {\(\bm{z}:=(z_t)_{t\in\mathcal{T}}\)} with the output of the neural network \(z_t\in [0,1]\) at each time step \(t\in\mathcal{T}\).

Let us numerically compare the performances of the proposed algorithms with Adam, AMSGrad, RMSProp, and AdaGrad. 
In this experiment, we used a random vector as the initial parameter $\bm{x}_1$ and \(\alpha_t=\alpha:=10^{-2}\){, for all $t\in\mathcal{T}$,} as the step size parameter of all algorithms. 
The previously reported results (see \cite{iiduka2019ensemble, iiduka2016convergence}) on convex optimization algorithms {empirically} used \(\alpha_t:=10^{-2}\) and \(\alpha_t:=10^{-3}\). 
We {used} the default values provided in {\tt torch.optim}\footnote{\url{https://pytorch.org/docs/stable/optim.html}} as the hyper parameter settings of the optimization algorithms and set $\beta_1:=0.9$ and $\beta_2:=0.999$ in Adam, AMSGrad, and CoBA. We set {\(\lambda:=2\)}, \(M:=10^{-4}\), and \(a:=1+10^{-5}\) in CoBA.

The result{s} of the experiment {are} reported in Figures \ref{fig1}--\ref{fig4}. 
Figure \ref{fig1} shows the behaviors of the algorithms for the loss function values defined by (\ref{eq:loss}) with respect to the number of epochs, while Figure \ref{fig2} shows those with respect to elapsed time [s]. 
Figure \ref{fig3} presents the accuracy scores of the classification on the training data with respect to the number of epochs, whereas Figure \ref{fig4} plots the accuracy score versus elapsed time [s]. 
We can see that the CoBA algorithms perform better than Adam, AdaGrad, and RMSProp in terms of both the train{ing} loss and accuracy score. 
In particular, Figures \ref{fig1} and \ref{fig2} show that CoBA using \(\gamma^{\text{HZ}}\) reduces the loss function values in fewer epochs and shorter elapsed time than AMSGrad. 
Figure \ref{fig3} and \ref{fig4} indicate that CoBA using \(\gamma^{\text{HZ}}\) reaches \(100\%\) {accuracy faster} than AMSGrad.

\begin{figure}[htbp]
\centering
\includegraphics[width=.85\textwidth]{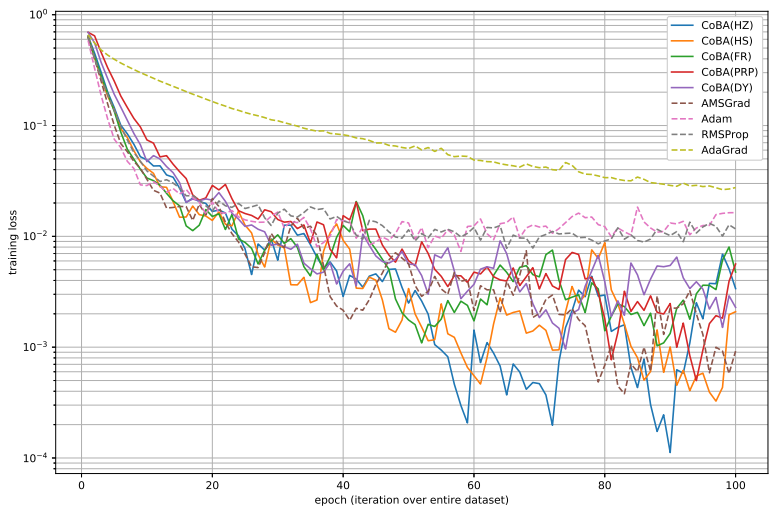}
\caption{\label{fig1}Loss function value versus number of epochs on the IMDb dataset for training.}
\end{figure}

\begin{figure}[htbp]
\centering
\includegraphics[width=.85\textwidth]{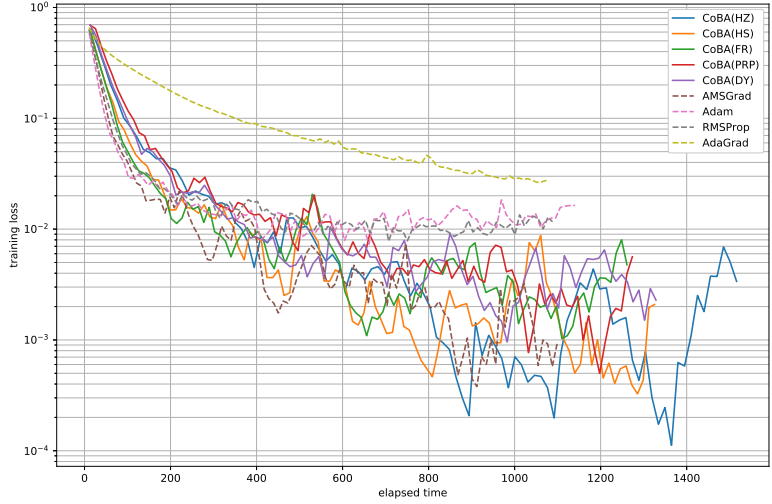}
\caption{\label{fig2}Loss function value {versus} elapsed time [s] on the IMDb dataset for training.}
\end{figure}

\begin{figure}[htbp]
\centering
\includegraphics[width=.85\textwidth]{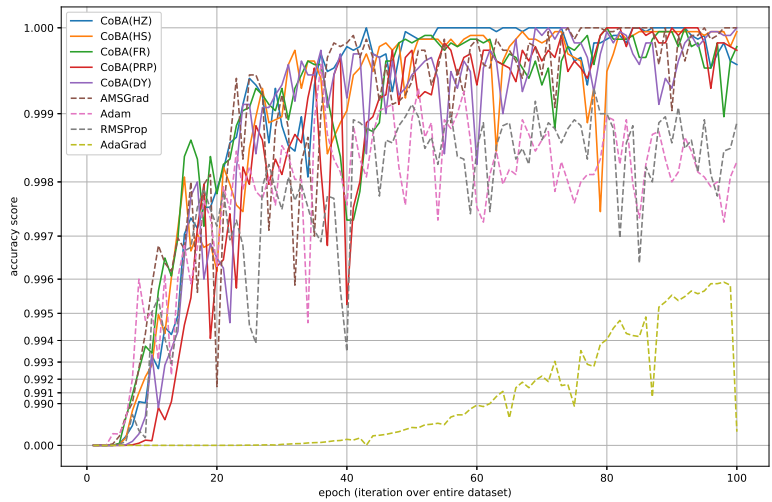}
\caption{\label{fig3}Classification accuracy score versus number of epochs on the IMDb dataset for training.}
\end{figure}

\begin{figure}[htbp]
\centering
\includegraphics[width=.85\textwidth]{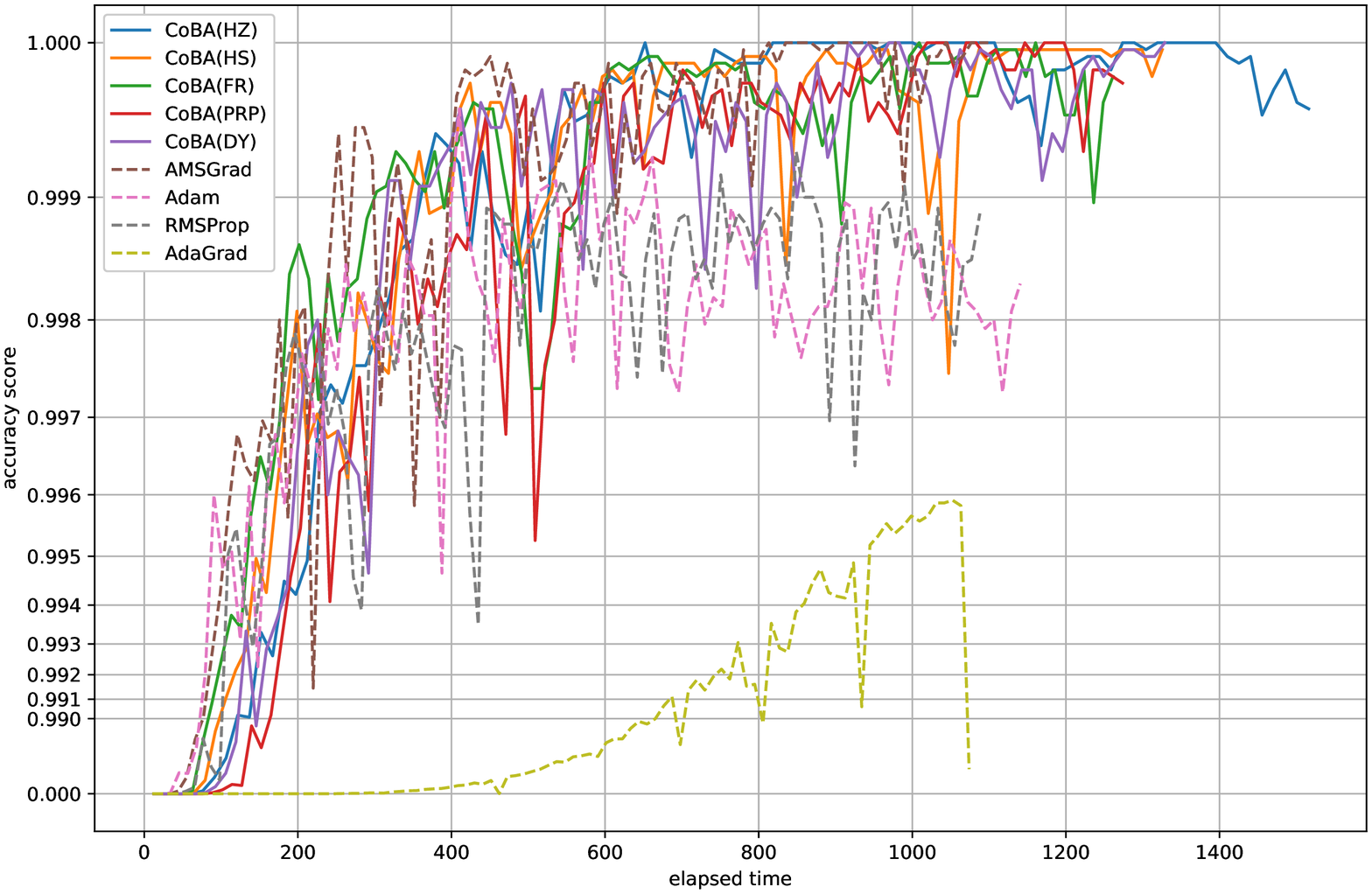}
\caption{\label{fig4}Classification accuracy score versus elapsed time [s] on the IMDb dataset for training.}
\end{figure}

\subsection{Image classification}
{We performed} numerical comparisons using Residual Network (ResNet) \cite{he2016deep}, {a} relatively deep model based on {a convolutional neural network} (CNN{), on an image classification task. Rather than having only convolutional layers,} ResNet {has additional} shortcut connections, e.g., identity mappings, between pairs of 3\(\times\)3 filters. 
The architecture can relieve the {\it degradation} problem {wherein} accuracy saturate{s} when a deeper neural network starts converging. 
As a result, ResNet {is considered to be a practical architecture for} image recognition {on} some datasets. 
In this experiment, we use{d the} CIFAR10 dataset \cite{cifar10}{, a} benchmark for image classification. The dataset consists of 60,000 color images (32\(\times\)32) in 10 classes, with 6,000 images per class. There are 50,000 training images and 10,000 test images. The test batch {contained} exactly 1,000 randomly selected images from each class.

{We trained a} 34-layer ResNet (ResNet-34) organized {into} a 7\(\times\)7 convolutional layer, {32 convolutional layers which have \(3\times3\) filters}, and a 1,000-way-fully-connected layer with {a} softmax function. 
We use{d the cross entropy} as {the} loss function for fitting ResNet {in accordance with the} common strategy in image classification. 
In the case of classification to {the} \(K\)-class, the {cross entropy} {\(L:\mathbb{R}^{T\times K}\times\mathbb{R}^{T\times K}\to\mathbb{R}\)} {\tt torch.nn.CrossEntropyLoss}\footnote{\url{https://pytorch.org/docs/stable/nn.html}} {is} defined as {follows}:
\begin{align}
	L(Y,Z):=-\frac{1}{T}\sum_{t=1}^T\sum_{k=1}^K y_{t,k}\log{z_{t,k}}, \label{eq:ce}
\end{align}
where {\(Y:=(y_{t,k})_{t\in\mathcal{T},k\in\mathcal{K}}\)} with the one-hot multi-class label \(y_{t,k}\in\{0,1\}\) and {\(Z:=(z_{t,k})_{t\in\mathcal{T},k\in\mathcal{K}}\)} with the output of the neural network \(z_{t,k}\in [0,1]\) for all \(t\in\mathcal{T}\) and \(k\in\mathcal{K}:=\{1,\ldots ,K\}\).

In this experiment, we used a random vector as the initial parameter $\bm{x}_1$ and \(\alpha_t=\alpha:=10^{-3}\){, for all $t\in\mathcal{T}$,} as the step size parameter \cite{iiduka2019ensemble, iiduka2016convergence} of all the algorithms. 
As {described in} Subsection \ref{subsec:text-classification}, we set the default values {of Adam, AMSGrad, and CoBA to} $\beta_1:=0.9$ and $\beta_2:=0.999$. 
For each type of conjugate gradient update parameter \(\gamma_t\), we set the coefficients \(M\) and \(a\) to values optimized by a grid search over a parameter grid consisting of \(M\in\{10^{-2},10^{-3},10^{-4}\}\) and \(a\in\{1+10^{-4},1+10^{-5},1+10^{-6},1+10^{-7}\}\). We set {\(\lambda:=2\)} in CoBA(HZ).

The results of the experiments are reported in Figure \ref{fig5}--\ref{fig8}. 
Figure \ref{fig5} {plots the} loss function values defined by \eqref{eq:ce} versus the number epochs, while Figure \ref{fig6} plots the loss function values versus elapsed time [s]. 
Figure \ref{fig7} presents the accuracy score on the dataset for training every epoch, whereas Figure \ref{fig8} plots the accuracy score versus elapsed time [s].

We can see that the CoBA algorithms perform better than Adam, AdaGrad, and RMSProp {in terms of} both the train loss and accuracy score. 
In particular, Figures \ref{fig5} and \ref{fig7} show that CoBA using \(\gamma^{\text{HS}}\), \(\gamma^{\text{FR}}\), \(\gamma^{\text{PRP}}\), \(\gamma^{\text{DY}}\), or \(\gamma^{\text{HZ}}\) reduce{s} the loss function values and {reaches an} accuracy score of \(100\%\) in fewer epochs than AMSGrad. 
Figure \ref{fig6} shows that CoBA and AMSGrad converge faster than {the} other algorithms. 
Although {CoBA} it takes more time than AMSGrad does to update the parameters of ResNet, they theoretically take about the same amount of time for computing the conjugate gradient direction \cite{golub2013matrix}. 
Figures \ref{fig7} and \ref{fig8} indicate that CoBA using \(\gamma^{\text{PRP}}\) reaches \(100\%\) accuracy faster than AMSGrad.

\begin{figure}[htbp]
\centering
\includegraphics[width=.85\textwidth]{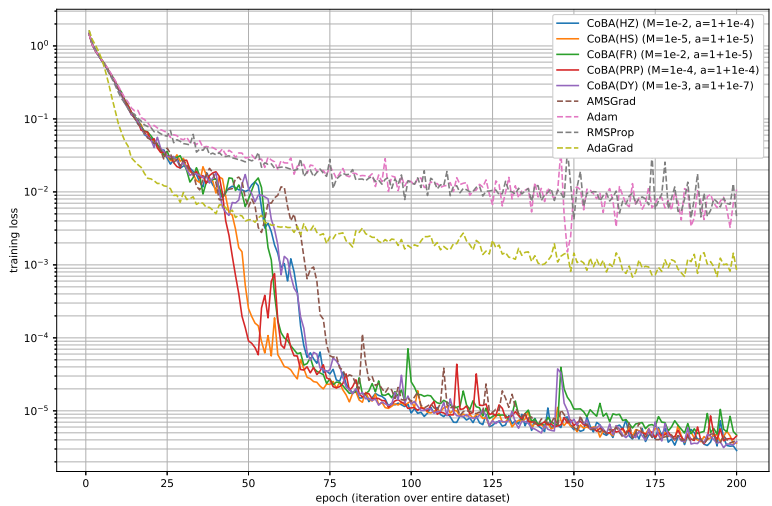}
\caption{\label{fig5}Loss function value {versus number of} epochs on the CIFAR-10 dataset for training.}
\end{figure}

\begin{figure}[htbp]
\centering
\includegraphics[width=.85\textwidth]{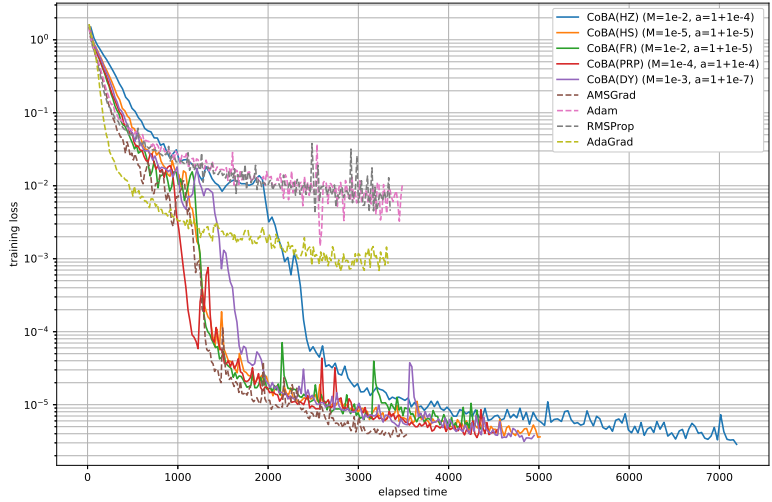}
\caption{\label{fig6}Loss function value versus elapsed time [s] on the CIFAR-10 dataset for training.}
\end{figure}

\begin{figure}[htbp]
\centering
\includegraphics[width=.85\textwidth]{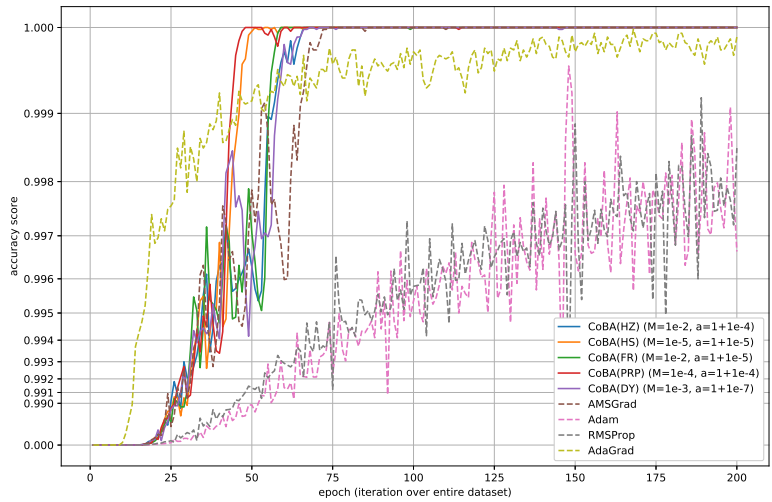}
\caption{\label{fig7}Classification accuracy score versus number of epochs on the CIFAR-10 dataset for training.}
\end{figure}

\begin{figure}[htbp]
\centering
\includegraphics[width=.85\textwidth]{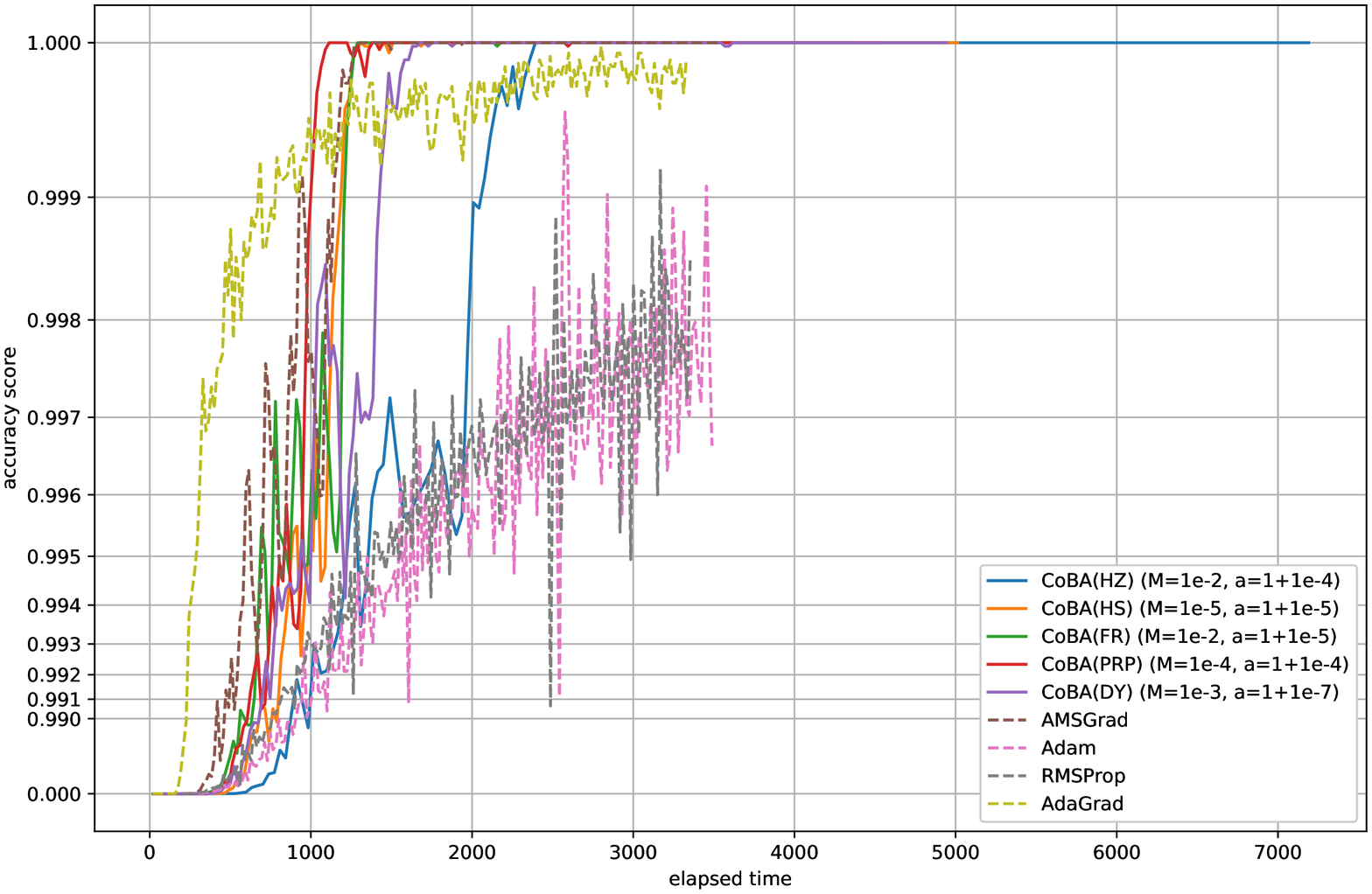}
\caption{\label{fig8}Classification accuracy score versus elapsed time [s] on the CIFAR-10 dataset for training.}
\end{figure}

%%%%%%%%%%%%%%%%%%%%%%%%%%%%%%%%%%%%%%%%%%
\section{Conclusion and Future Work}\label{sec:ccl}
We presented the {c}onjugate-gradient-{b}ased Adam (CoBA) algorithm for solving stochastic optimization problem{s that} minimize the empirical risk in fitting {of} deep neural networks and showed its convergence. 
{We} numerically {compared CoBA} with {an} existing learning method {in a} text classification {task using the} IMDb dataset and {an} image classification {task using the} CIFAR-10 dataset. 
The results demonstrated {its} optimality and efficiency. 
In particular, {compared with the existing methods, CoBA reduced} the loss function value in fewer epochs {on both datasets}. 
In addition, it classification score reached a \(100\%\) accuracy in fewer epochs compared with the existing methods.

In the future, we would like to improve the implementation of {the} proposed algorithms to enable {computation of} conjugate gradients in {a} theoretically reasonable time. 
In addition, we would like to design a more appropriate a stochastic conjugate gradient direction and conjugate gradient update parameter, e.g., {one in which the expected value is equivalent to a deterministic conjugate gradient}. Furthermore, we would like to {find a} way to find a suitable step size which permits {the} proposed algorithm to converge {faster} to the {solution to the} stochastic optimization problem.

\appendix
\section{Proof of Theorem\ref{thm:ca}}\label{apx:ca}
To begin with, we {prove} the following lemma with the boundedness of the conjugate gradient direction \(\bm{\mathsf{d}}_t\ (t\in\mathcal{T})\).
\begin{lemma}\label{lem:cg-bdd}
{Suppose} that \((\bm{x}_t)_{t\in\mathcal{T}}\) is the sequence generated by Algorithm \ref{algo:coba} with the parameter settings and conditions assumed in Theorem \ref{thm:ca}. 
{Further, assume} that \((\gamma_t)_{t\in\mathcal{T}}\) is bounded and there exist \(G_{\infty},\bar{G}_\infty\in\mathbb{R}\) such that \(G_{\infty}=\max_{t\in\mathcal{T}}\left({\sup_{\bm{x}\in\mathcal{F}}{\|\nabla f_t(\bm{x})\|_{\infty}}}\right)\) and \(\bar{G}_\infty=\max{\{2G_\infty, \max_{t\in\{1,\ldots ,t_0-1\}}{\|\bm{\mathsf{d}}_t\|\}}}\) for some \(t_0\). Then, \(\|\bm{\mathsf{d}}_t\|\le\bar{G}_\infty\) holds for all \(t\).
\end{lemma}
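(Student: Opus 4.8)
The plan is to bound $\|\bm{\mathsf{d}}_t\|$ by a straightforward induction on $t$, exploiting the recursive form of the search direction. By step~6 of Algorithm~\ref{algo:coba} and the triangle inequality,
\begin{equation*}
\|\bm{\mathsf{d}}_t\| \le \|\bm{\mathsf{g}}_t\| + \frac{M}{t^a}|\gamma_t|\,\|\bm{\mathsf{d}}_{t-1}\|,
\end{equation*}
so the whole problem reduces to controlling the two factors on the right: the stochastic gradient $\bm{\mathsf{g}}_t=\nabla f_t(\bm{x}_t)$, which is bounded by $G_\infty$ because $\bm{x}_t\in\mathcal{F}$, and the multiplicative coefficient $\frac{M}{t^a}|\gamma_t|$ sitting in front of the previous direction.

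The key observation is that this coefficient decays. Since $(\gamma_t)_{t\in\mathcal{T}}$ is assumed bounded, say $|\gamma_t|\le\Gamma$ for all $t$, and since $a>1>0$, we have $\frac{M}{t^a}|\gamma_t|\le\frac{M\Gamma}{t^a}\to 0$ as $t\to\infty$. Hence there is an index $t_0$ with $\frac{M}{t^a}|\gamma_t|\le\tfrac12$ for every $t\ge t_0$; this is precisely the $t_0$ entering the definition of $\bar{G}_\infty$. The constant $\bar{G}_\infty=\max\{2G_\infty,\ \max_{1\le t\le t_0-1}\|\bm{\mathsf{d}}_t\|\}$ is then engineered so that both the finitely many early terms and the ``tail'' contraction argument are absorbed by the same bound.

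First I would dispose of the base cases $t\in\{1,\ldots,t_0-1\}$: for these indices $\|\bm{\mathsf{d}}_t\|\le\max_{1\le t\le t_0-1}\|\bm{\mathsf{d}}_t\|\le\bar{G}_\infty$ holds trivially by the definition of $\bar{G}_\infty$. For the inductive step, fix $t\ge t_0$ and assume $\|\bm{\mathsf{d}}_{t-1}\|\le\bar{G}_\infty$. Combining the displayed bound with $\|\bm{\mathsf{g}}_t\|\le G_\infty\le\tfrac12\bar{G}_\infty$ (using $2G_\infty\le\bar{G}_\infty$) and $\frac{M}{t^a}|\gamma_t|\le\tfrac12$ gives
\begin{equation*}
\|\bm{\mathsf{d}}_t\| \le G_\infty + \tfrac12\|\bm{\mathsf{d}}_{t-1}\| \le \tfrac12\bar{G}_\infty + \tfrac12\bar{G}_\infty = \bar{G}_\infty,
\end{equation*}
which closes the induction and yields $\|\bm{\mathsf{d}}_t\|\le\bar{G}_\infty$ for all $t$.

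The only genuinely delicate point is locating the threshold $t_0$ at which the coefficient drops below $\tfrac12$: this is exactly where both hypotheses (boundedness of $\gamma_t$ and $a>1$) are used together, and the factor $2$ in $2G_\infty$ is precisely what matches the resulting contraction ratio $\tfrac12$. Everything else is routine. One caveat worth flagging: the gradient hypothesis is stated in the $\ell_\infty$ norm, $\|\nabla f_t(\bm{x})\|_\infty\le G_\infty$, whereas the conclusion is in the Euclidean norm, so I would either reconcile the two norms explicitly or read $G_\infty$ as an $\ell_\infty$ bound throughout, since that is what lets the constant $2$ close the argument cleanly.
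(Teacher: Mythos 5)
Your proof is correct and follows essentially the same route as the paper's: the same triangle-inequality recursion, the same choice of $t_0$ via the decay of $\frac{M}{t^a}|\gamma_t|$ to obtain the contraction factor $\tfrac{1}{2}$, and the same two-part induction with base cases absorbed into $\bar{G}_\infty$. Your caveat about the $\ell_\infty$ versus Euclidean norm mismatch in the definition of $G_\infty$ is a fair observation — the paper's own proof silently makes the same identification — but it does not change the structure of the argument.
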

\begin{proof}
We {will} use mathematical induction. {The fact that \(\frac{M}{t^a}|\gamma_t|\to0\) \((t\to\infty)\) ensures that there exists $t_0 \in \mathbb{N}$ such that, for all \(t \ge t_0\), 
	\begin{align}\label{eq:1}
	\frac{M}{t^a}|\gamma_t|\le\frac{1}{2}.
	\end{align} }
The definition of \(\bar{G}_\infty\) implies that {$\|\bm{\mathsf{d}}_{t} \| \le \bar{G}_\infty$} for all \(t<t_0\). {Suppose} that \(\|\bm{\mathsf{d}}_{j-1}\|\le \bar{G}_{\infty}\) for some \( j\ge t_0 \). Then, from the definition of \(\bm{\mathsf{d}}_{j}\) and the triangle inequality,
	\begin{align*}
		\|\bm{\mathsf{d}}_j\| &\le \|\bm{\mathsf{g}}_j\|+\frac{M}{j^a}|\gamma_j|\|\bm{\mathsf{d}}_{j-1}\|\\
		&\le G_\infty + \frac{M}{j^a}|\gamma_j|\bar{G}_\infty,
	\end{align*}
{which, together with \eqref{eq:1}, implies that} 
	\begin{align*}
		\|\bm{\mathsf{d}}_j\| \le \frac{\bar{G}_\infty}{2} + \frac{\bar{G}_\infty}{2} = \bar{G}_\infty.
	\end{align*}
Accordingly, \(\|\bm{\mathsf{d}}_t\|\le\bar{G}_\infty\) holds for all \(t\).
\end{proof}

Next, we show the following lemma:
\begin{lemma}\label{lem:d-bdd}
For the parameter settings and conditions assumed in Theorem \ref{thm:ca} and for any \(l\in\{0,1,\ldots ,T-2\}\), we have
\begin{align*}
	\begin{split}
		&\sum_{t=l+1}^T \alpha_t \sum_{i=1}^N\frac{m_{t-l, i}^2}{\sqrt{\hat{v}_{t, i}}}\\
		\le &\frac{\alpha \sqrt{1+\log T}}{(1-\beta_1)(1-\mu)\sqrt{1-\beta_2}}
			\sum_{i=1}^N \sqrt{\sum_{t=1}^{T}\mathsf{d}_{t,i}^2}.
	\end{split}
\end{align*}
\end{lemma}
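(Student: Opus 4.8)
The plan is to mirror the AMSGrad estimate (Reddi et al., Lemma 2), adapting it to the conjugate-gradient momentum and to the index shift by $l$. First I would unroll the recursion $\bm{m}_t=\beta_{1t}\bm{m}_{t-1}+(1-\beta_{1t})\bm{\mathsf{d}}_t$ with $\bm{m}_0=\bm{0}$ to write, coordinatewise,
\[
m_{t-l,i}=\sum_{k=1}^{t-l}\Bigl[(1-\beta_{1k})\prod_{j=k+1}^{t-l}\beta_{1j}\Bigr]\mathsf{d}_{k,i}.
\]
Since $\beta_{1j}\le\beta_1$ and $1-\beta_{1k}\le 1$, each weight is at most $\beta_1^{t-l-k}$, so the Cauchy--Schwarz inequality applied with these weights gives
\[
m_{t-l,i}^2\le\Bigl(\sum_{k=1}^{t-l}\beta_1^{t-l-k}\Bigr)\Bigl(\sum_{k=1}^{t-l}\beta_1^{t-l-k}\mathsf{d}_{k,i}^2\Bigr)\le\frac{1}{1-\beta_1}\sum_{k=1}^{t-l}\beta_1^{t-l-k}\mathsf{d}_{k,i}^2.
\]

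Next I would lower-bound the denominator. Because $\hat{\bm{v}}_t$ is a running element-wise maximum it is nondecreasing in $t$, so $\hat{v}_{t,i}\ge\hat{v}_{t-l,i}\ge v_{t-l,i}$; unrolling the second-moment recursion then yields $v_{t-l,i}\ge(1-\beta_2)\beta_2^{t-l-k}\mathsf{d}_{k,i}^2$ for every $k\le t-l$, hence $\sqrt{\hat{v}_{t,i}}\ge\sqrt{(1-\beta_2)\beta_2^{t-l-k}}\,|\mathsf{d}_{k,i}|$. Dividing the previous display term by term and writing $\mu=\beta_1/\sqrt{\beta_2}$ gives
\[
\frac{m_{t-l,i}^2}{\sqrt{\hat{v}_{t,i}}}\le\frac{1}{(1-\beta_1)\sqrt{1-\beta_2}}\sum_{k=1}^{t-l}\mu^{t-l-k}|\mathsf{d}_{k,i}|.
\]
I would then multiply by $\alpha_t=\alpha/\sqrt{t}$, sum over $t$ from $l+1$ to $T$, and interchange the order of the $t$- and $k$-summations. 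For fixed $k$ the inner index runs over $t\ge k+l$, so $1/\sqrt{t}\le 1/\sqrt{k}$ and the geometric tail satisfies $\sum_{t\ge k+l}\mu^{t-l-k}\le 1/(1-\mu)$; this collapses the double sum to $\frac{\alpha}{(1-\beta_1)(1-\mu)\sqrt{1-\beta_2}}\sum_{k}|\mathsf{d}_{k,i}|/\sqrt{k}$. A final Cauchy--Schwarz step, using $\sum_{k=1}^T 1/k\le 1+\log T$, converts $\sum_k|\mathsf{d}_{k,i}|/\sqrt{k}$ into $\sqrt{1+\log T}\,\sqrt{\sum_t\mathsf{d}_{t,i}^2}$, and summing over $i\in\mathcal{N}$ produces the claimed bound.

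The main obstacle I expect is the bookkeeping in the double-sum interchange together with the shift $l$: after swapping one must verify that the inner sum over $t$ begins at $t=k+l$ rather than $t=k$, so that the decay factor $1/\sqrt{t}$ can still be pulled out as $1/\sqrt{k}$ and the geometric sum is bounded uniformly by $1/(1-\mu)$. The other delicate point is applying the nondecreasing property of $\hat{\bm{v}}_t$ in the right direction, so that the current-time $\sqrt{\hat{v}_{t,i}}$ in the denominator dominates the earlier-time quantities appearing in the unrolled numerator $m_{t-l,i}$. These are exactly the places where replacing $\bm{\mathsf{g}}_t$ by the conjugate-gradient direction $\bm{\mathsf{d}}_t$ must be tracked consistently through both the momentum and the second-moment sequences so that $\mathsf{d}_{t,i}$ (not $\mathsf{g}_{t,i}$) appears on the right-hand side.
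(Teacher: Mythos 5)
Your proof is correct and follows essentially the same route as the paper's: unroll the momentum recursion, apply Cauchy--Schwarz to the weights, lower-bound \(\sqrt{\hat{v}_{t,i}}\) by a single geometric term of the second-moment recursion to produce the powers of \(\mu\), swap the double sum and bound the geometric tail by \(1/(1-\mu)\), and finish with Cauchy--Schwarz and \(\sum_{t\le T}1/t\le 1+\log T\). If anything, your bookkeeping of the shift \(l\) (pairing \(\beta_1^{t-l-k}\) with \(\beta_2^{t-l-k}\) to get \(\mu^{t-l-k}\)) is slightly more careful than the paper's, which writes the exponents without the \(-l\); and you reproduce the paper's own convention of treating the second-moment sequence as built from \(\mathsf{d}_{k,i}^2\), consistent with the \(\sum_t\mathsf{d}_{t,i}^2\) appearing in the lemma's statement.
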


\begin{proof}
Let \(l\in\{0,1,\ldots ,T-2\}\) {be} fixed arbitrarily. For all \( j\in\mathcal{T} \), we define \( \bar{\beta}_{1j} := \prod_{k=j+1}^{T-l}\beta_{1k} \). Then, we have
\begin{align*}
	&\sum_{t=l+1}^T \alpha_t \sum_{i=1}^N\frac{m_{t-l, i}^2}{\sqrt{\hat{v}_{t, i}}}\\
	=&\sum_{t=l+1}^{T-1} \alpha_t \sum_{i=1}^N\frac{m_{t-l, i}^2}{\sqrt{\hat{v}_{t, i}}} + \alpha_T\sum_{i=1}^N\frac{m_{T-l, i}^2}{\sqrt{\hat{v}_{T, i}}}\\
	\le&\sum_{t=l+1}^{T-1} \alpha_t \sum_{i=1}^N\frac{m_{t-l, i}^2}{\sqrt{\hat{v}_{t, i}}} + \alpha_T\sum_{i=1}^N\frac{m_{T-l, i}^2}{\sqrt{v_{T, i}}}\\
	\le&\sum_{t=l+1}^{T-1} \alpha_t \sum_{i=1}^N\frac{m_{t-l, i}^2}{\sqrt{\hat{v}_{t, i}}} + \frac{\alpha}{\sqrt{T}}\sum_{i=1}^N\frac {\left\{\sum_{j=1}^{T-l}(1-\beta_{1j})\bar{\beta}_{1j}\mathsf{d}_{j,i}\right\}^2}{\sqrt{(1-\beta_2) \sum_{j=1}^T\beta_2^{T-j}\mathsf{d}_{j,i}^2 }},\\
	\le&\sum_{t=l+1}^{T-1} \alpha_t \sum_{i=1}^N\frac{m_{t-l, i}^2}{\sqrt{\hat{v}_{t, i}}} + \frac{\alpha}{\sqrt{T(1-\beta_2)}}\sum_{i=1}^N\frac{\left(\sum_{j=1}^{T-l}\bar{\beta}_{1j}\mathsf{d}_{j,i}\right)^2}{\sqrt{\sum_{j=1}^T\beta_2^{T-j}\mathsf{d}_{j,i}^2 }},
\end{align*}
where the second inequality comes from the definition of \(\hat{v}_{T,i}\), which is \(\hat{v}_{T,i}=\max \{ \hat{v}_{T-1, i}, v_{T, i}\} \), the third one follows from \(\alpha_T:=\alpha /\sqrt{T}\) and the update rules of \(m_{T,i}\) and \(v_{T,i}\) in Algorithm \ref{algo:coba} for \(i\in\mathcal{N}\), and the {fourth} one comes from \(\beta_{1j}\le\beta_1\) for all \(j\in\mathcal{T}\). Here, from the Cauchy-Schwarz inequality and the fact that \(\bar{\beta}_{1j}\le\beta_1^{T-j}\le 1\) for all \(j\in\mathcal{T}\), {we have}
\begin{align*}
	&\sum_{i=1}^N\frac {\left(\sum_{j=1}^{T-l}\bar{\beta}_{1j}\mathsf{d}_{j,i}\right)^2}{\sqrt{\sum_{j=1}^T\beta_2^{T-j}\mathsf{d}_{j,i}^2}}\\
	\le &\sum_{i=1}^N\frac{\left(\sum_{j=1}^{T-l}\bar{\beta}_{1j}\right)\left(\sum_{j=1}^{T-l}\bar{\beta}_{1j}\mathsf{d}_{j,i}^2\right)}{\sqrt{\sum_{j=1}^T\beta_2^{T-j}\mathsf{d}_{j,i}^2}}\\
	\le&\frac{1}{1-\beta_1}\sum_{i=1}^N\frac{\sum_{j=1}^T\beta_1^{T-j}\mathsf{d}_{j,i}^2}{\sqrt{\beta_2^{T-j}\mathsf{d}_{j,i}^2}}\\
	\le&\frac{1}{1-\beta_1}\sum_{i=1}^N\sum_{j=1}^T\mu^{T-j}|\mathsf{d}_{j,i}|,
\end{align*}
where \(\mu\) is defined by \(\mu:=\beta_1/\sqrt{\beta_2}\). Hence, we have
\begin{align*}
	&\sum_{t=l+1}^{T}\alpha_t \sum_{i=1}^N\frac{m_{t-l, i}^2}{\sqrt{\hat{v}_{t, i}}}
	\le\sum_{t=l+1}^{T-1}\alpha_t \sum_{i=1}^N\frac{m_{t-l, i}^2}{\sqrt{\hat{v}_{t, i}}}
	+ \frac{\alpha}{(1-\beta_1)\sqrt{1-\beta_2}}\sum_{i=1}^N\sum_{j=1}^T\frac{\mu^{T-j}|\mathsf{d}_{j,i}|}{\sqrt{T}}.
\end{align*}
A discussion similar to the one for all \(t\in\{l+1,\ldots ,T-1\}\) ensures that
\begin{align*}
	&\sum_{t=l+1}^{T}\alpha_t \sum_{i=1}^N\frac{m_{t-l, i}^2}{\sqrt{\hat{v}_{t, i}}}\\
	\le&\sum_{t=l+1}^{T}\frac{\alpha}{(1-\beta_1)\sqrt{1-\beta_2}}\sum_{i=1}^N\sum_{j=1}^t\frac{\mu^{t-j}|\mathsf{d}_{j,i}|}{\sqrt{t}}\\
	=&\frac{\alpha}{(1-\beta_1)\sqrt{1-\beta_2}}\sum_{i=1}^N\sum_{t=l+1}^{T}\sum_{j=1}^t\frac{\mu^{t-j}|\mathsf{d}_{j,i}|}{\sqrt{t}}\\
	=&\frac{\alpha}{(1-\beta_1)\sqrt{1-\beta_2}}\sum_{i=1}^N\sum_{t=l+1}^{T}|\mathsf{d}_{t,i}|\sum_{j=t}^T\frac{\mu^{j-t}}{\sqrt{j}}\\
	\le&\frac{\alpha}{(1-\beta_1)\sqrt{1-\beta_2}}\sum_{i=1}^N\sum_{t=l+1}^{T}|\mathsf{d}_{t,i}|\sum_{j=t}^T\frac{\mu^{j-t}}{\sqrt{t}}\\
	\le&\frac{\alpha}{(1-\beta_1)(1-\mu)\sqrt{1-\beta_2}}\sum_{i=1}^N\sum_{t=l+1}^{T}\frac{|\mathsf{d}_{t,i}|}{\sqrt{t}}.
\end{align*}
From the Cauchy-Schwarz inequality,
\begin{align*}
	&\sum_{t=l+1}^{T}\alpha_t \sum_{i=1}^N\frac{m_{t-l, i}^2}{\sqrt{\hat{v}_{t, i}}}\\
	\le&\frac{\alpha}{(1-\beta_1)(1-\mu)\sqrt{1-\beta_2}}\sum_{i=1}^N\sqrt{\sum_{t=l+1}^{T}\mathsf{d}_{t,i}^2}\sqrt{\sum_{t=l+1}^{T}\frac{1}{t}}\\
	\le&\frac{\alpha\sqrt{1+\log T}}{(1-\beta_1)(1-\mu)\sqrt{1-\beta_2}}\sum_{i=1}^N\sqrt{\sum_{t=1}^{T}\mathsf{d}_{t,i}^2}.
\end{align*}
This completes the proof.
\end{proof}

Finally, we {prove} Theorem \ref{thm:ca}.
\begin{proof}
Let \(\bm{x}^\star\in\argmin_{\bm{x}\in\mathcal{F}}f(\bm{x})\) and \(t\in\mathcal{T}\) be fixed arbitrarily. From the update rule of Algorithm \ref{algo:coba}, {we have}
\begin{align*}
	\{\bm{x}_{t+1}\}&=\left\{\Pi_{\mathcal{F}, \rhv}\left(\bm{x}_{t}-\alpha_t\hat{\bm{\mathsf{d}}}_t\right)\right\}\\
	&=\argmin_{\bm{x}\in\mathcal{F}}\left\| \bm{x}_{t}-\alpha_t\hat{\bm{\mathsf{d}}}_t-\bm{x}^\star\right\|_\rhv.
\end{align*}
Here, for all {positive-}definite matrix{es} \(Q\in\mathbb{R}^{N\times N}\) and for all \(\bm{z}_1,\bm{z}_2\in\mathbb{R}^N\) with \(\bm{u}_1:=\Pi_{\mathcal{F}, Q}(\bm{z}_1), \bm{u}_2:=\Pi_{\mathcal{F}, Q}(\bm{z}_2)\), we have \(\left\|\bm{u}_2-\bm{u}_1\right\|_Q\le\left\|\bm{z}_1-\bm{z}_2\right\|_Q\)\cite[Lemma 4]{reddi2019convergence}. Hence, 
\begin{align*}
	&\left\|\bm{x}_{t+1}-\bm{x}^\star\right\|_\rhv^2
	\le \left\|\bm{x}_{t}-\alpha_t\hat{\bm{\mathsf{d}}}_t-\bm{x}^\star\right\|_\rhv^2,
\end{align*}
which, together with $\|\bm{x}-\bm{y}\|^2 = \|\bm{x}\|^2 + \|\bm{y}\|^2 + 2 \langle \bm{x}, \bm{y} \rangle$ ($\bm{x},\bm{y} \in \mathbb{R}^N$) and the definitions of $\bm{m}_t$ and $\bm{\mathsf{d}}_t$, implies that
\begin{align*} 
	&\left\|\bm{x}_{t+1}-\bm{x}^\star\right\|_\rhv^2\\
	\le&\left\| \bm{x}_{t}-\bm{x}^\star\right\|_\rhv^2 +\alpha_t^2\left\|\hat{\bm{\mathsf{d}}}_t \right\|_\rhv^2\\
	&-2\alpha_t\langle\bm{m}_t ,\bm{x}_{t}-\bm{x}^\star\rangle\\
	\le&\left\| \bm{x}_{t}-\bm{x}^\star\right\|_\rhv^2 +\alpha_t^2\left\|\hat{\bm{\mathsf{d}}}_t \right\|_\rhv^2\\
	&-2\alpha_t\langle\beta_{1t}\bm{m}_{t-1}+(1-\beta_{1t})\bm{\mathsf{d}}_t ,\bm{x}_{t}-\bm{x}^\star\rangle\\
	\le&\left\| \bm{x}_{t}-\bm{x}^\star\right\|_\rhv^2 +\alpha_t^2\left\|\hat{\bm{\mathsf{d}}}_t \right\|_\rhv^2
	-2\alpha_t\beta_{1t}\langle\bm{m}_{t-1},\bm{x}_{t}-\bm{x}^\star\rangle\\
	&-2\alpha_t(1-\beta_{1t})\left\langle\bm{\mathsf{g}}_t -\frac{\gamma_t}{t^a}\bm{\mathsf{d}}_{t-1},\bm{x}_{t}-\bm{x}^\star\right\rangle.
\end{align*}
From the Cauchy-Schwarz and Young {inequalities} with 
\begin{align*}
	\bar{\bm{\mathsf{d}}}_t:=\left[\ {m_{t-1,1}}/{\sqrt{\hat{v}_{t,1}}}, \ldots , {m_{t-1,N}}/{\sqrt{\hat{v}_{t,N}}}\ \right]^\top,
\end{align*}
we get
\begin{align*}
	&-\langle\bm{m}_{t-1} ,\bm{x}_{t}-\bm{x}^\star\rangle\\
	\le&\sqrt{\alpha_t} \left\| \bar{\bm{\mathsf{d}}}_t \right\|_\rhv \cdot \frac{1}{\sqrt{\alpha_t}} \left\| \bm{x}_{t}-\bm{x}^\star \right\|_\rhv\\
	\le&\frac{\alpha_t}{2} \left\| \bar{\bm{\mathsf{d}}}_t \right\|_\rhv^2 + \frac{1}{2\alpha_t} \left\| \bm{x}_{t}-\bm{x}^\star \right\|_\rhv^2,
\end{align*}
which implies that
\begin{align*}
	&\langle\bm{\mathsf{g}}_t ,\bm{x}_{t}-\bm{x}^\star\rangle\\
	\le&\frac{1}{2\alpha_t(1-\beta_{1t})}\left\{\left\| \bm{x}_{t}-\bm{x}^\star\right\|_\rhv^2-\left\|\bm{x}_{t+1}-\bm{x}^\star\right\|_\rhv^2\right\}\\
	&+\frac{\alpha_t}{2(1-\beta_{1t})}\left\{\left\| \hat{\bm{\mathsf{d}}}_t \right\|_\rhv^2+\beta_{1t}\left\| \bar{\bm{\mathsf{d}}}_t \right\|_\rhv^2\right\}\\
	&+\frac{\beta_{1t}}{\alpha_t(1-\beta_{1t})}\left\|\bm{x}_{t}-\bm{x}^\star \right\|_\rhv^2\\
	&+\frac{\gamma_t}{t^a}\left\langle\bm{\mathsf{d}}_{t-1},\bm{x}_{t}-\bm{x}^\star\right\rangle.
\end{align*}
Summing the above inequality from $t=1$ to $T$ ensures that
\begin{align*}
	R(T) =&\sum_{t=1}^T\{f_t(\bm{x}_t-f(\bm{x}^\star)\}\le\sum_{t=1}^T\langle\bm{\mathsf{g}}_t , \bm{x}_{t}-\bm{x}^\star\rangle\\
	\le&\frac{1}{2\alpha_1(1-\beta_1)}\left\| \bm{x}_{1}-\bm{x}^\star\right\|_\rhv^2\\
	&+\frac{1}{2(1-\beta_1)} \sum_{t=2}^T\left\{ \frac{\left\| \bm{x}_{t}-\bm{x}^\star\right\|_\rhv^2}{\alpha_t} - \frac{\left\| \bm{x}_t-\bm{x}^\star\right\|_{\hat{V}_{t-1}^{\frac{1}{2}}}^2}{\alpha_{t-1}} \right\}\\
	&+\frac{\alpha\sqrt{1+\log T}}{(1-\beta_1)^2(1-\mu)\sqrt{1-\beta_2}}\sum_{i=1}^N\sqrt{\sum_{t=1}^{T}\mathsf{d}_{t,i}^2}\\
	&+\sum_{t=1}^T\frac{\beta_{1t}}{2\alpha_t(1-\beta_{1t})}\left\|\bm{x}_t-\bm{x}^\star \right\|_\rhv^2\\
	&+\sum_{t=1}^T \frac{\gamma_t}{t^a}\left\langle\bm{\mathsf{d}}_{t-1},\bm{x}_t-\bm{x}^\star\right\rangle,
\end{align*}
which, together with Lemma \ref{lem:d-bdd}, implies that
\begin{align*}
	R(T) &\leq \frac{1}{2\alpha_1(1-\beta_1)}\sum_{i=1}^N \sqrt{\hat{v}_{1,i}}(x_{1,i}-x_i^\star)^2\\
	&+\frac{1}{2(1-\beta_1)} \sum_{t=2}^T\sum_{i=1}^N(x_{t,i}-x_i^\star)^2\left( \frac{\sqrt{\hat{v}_{t, i}}}{\alpha_t} - \frac{\sqrt{\hat{v}_{t-1, i}}}{\alpha_{t-1}} \right)\\
	&+\frac{\alpha\sqrt{1+\log T}}{(1-\beta_1)^2(1-\mu)\sqrt{1-\beta_2}}\sum_{i=1}^N\sqrt{\sum_{t=1}^{T}\mathsf{d}_{t,i}^2}\\
	&+\frac{1}{2(1-\beta_1)}\sum_{t=1}^T\frac{\beta_{1t}}{\alpha_t}\sum_{i=1}^N \sqrt{\hat{v}_{t,i}}(x_{t,i}-x_i^\star)^2\\
	&+\sum_{t=1}^T \frac{\gamma_t}{t^a}\left\langle\bm{\mathsf{d}}_{t-1},\bm{x}_t-\bm{x}^\star\right\rangle.
\end{align*}
The fact that \(D_\infty\) is the bounded diameter of the feasible set \(\mathcal{F}\) ensures that \(|x_{t,i}-x_i^\star |\le D_\infty\). Therefore, Lemma \ref{lem:d-bdd} guarantees that 
\begin{align*}
	R(T)\le&\frac{D_\infty^2\sqrt{T}}{\alpha(1-\beta_1)}\sum_{i=1}^N \sqrt{\hat{v}_{T,i}}\\
	&+\frac{\alpha\sqrt{1+\log T}}{(1-\beta_1)^2(1-\mu)\sqrt{1-\beta_2}}\sum_{i=1}^N\sqrt{\sum_{t=1}^{T}\mathsf{d}_{t,i}^2}\\
	&+\frac{D_\infty^2}{2(1-\beta_1)}\sum_{t=1}^T\frac{\beta_{1t}}{\alpha_t}\sum_{i=1}^N \sqrt{\hat{v}_{t,i}}
	+D_\infty \bar{G}_\infty\sum_{t=1}^T \frac{|\gamma_t|}{t^a}.
\end{align*}
This completes the proof.
\end{proof}

\section*{Acknowledgments}
We thank Kazuhiro Hishinuma for his input on the numerical evaluation.

%%%%%%%%%%%%%%%%%%%%%%%%%%%%%%%%%%%%%%%%%%%%%%%%%%%%%%%%%

\end{document}